\documentclass{amsart}%
\usepackage{amsmath}
\usepackage{amssymb}
\usepackage{amsfonts}
\usepackage{graphicx}%
\providecommand{\U}[1]{\protect\rule{.1in}{.1in}}
\newtheorem{theorem}{Theorem}[section]
\theoremstyle{plain}

\newtheorem{corollary}{Corollary}[section]

\newtheorem{example}{Example}[section]

\newtheorem{lemma}{Lemma}[section]

\numberwithin{equation}{section}
\begin{document}
\title[Stern polynomials and algebraic independence]{Stern polynomials and algebraic independence}
\author{Daniel Duverney}
\address{110, rue du chevalier fran\c{c}ais, 59000 Lille, France}
\email{daniel.duverney@orange.fr}
\author{Iekata Shiokawa}
\address{13-43, Fujizuka-cho, Hodogaya-ku, Yokohama 240-0031, Japan}
\email{shiokawa@beige.ocn.ne.jp}
\date{March 23, 2026}
\subjclass{11J81, 11A55, 11J85.}
\keywords{Continued fractions; Stern polynomials; Algebraic independence; Transcendence;
Mahler's method.}
\maketitle

\begin{abstract}
Let $t\geq2$ and $k\geq1$ be integers. Let $H_{k}(z)$ with $\left\vert
z\right\vert <1$ be the limit of a certain subsequence of the Stern
polynomials introduced by Dilcher and Eriksen. We use Mahler's method to prove
the algebraic independence of the values at nonzero algebraic points of the
functions $H_{k}(z)$ and $H_{k}(z^{t^{k}})$.

\end{abstract}

\section{Introduction}

Throughout this paper, let $t\geq2$ and $k\geq1$ be any fixed integers.
Stern's diatomic sequence $a(n),$ defined by $a(0)=0,$ $a(1)=1,$ and for
$n\geq1$%
\[
a(2n)=a(n),\quad a(2n+1)=a(n+1)+a(n),
\]
has been studied by many authors (cf. Sequence OEIS A002487 in \cite{O}). In
2007, Klavzar, Milutinovic and Petr introduced Stern polynomials $a(n;z)$
defined by $a(0;z)=0,$ $a(1;z)=1$ and for $n\geq1$%
\[
a(2n;z)=za(n;z),\quad a(2n+1;z)=a(n+1;z)+a(n;z).
\]
In 2018, Dilcher and Eriksen \cite{Dilcher} extended them to (Type 1) Stern
polynomials $a_{t}(n;z)$ defined by $a_{t}(0;z)=0,$ $a_{t}(1;z)=1$ and for
$n\geq1$%
\begin{align}
a_{t}(2n;z)  &  =za_{t}(n;z^{t}),\quad\label{Stern}\\
a_{t}(2n+1;z)  &  =a_{t}(n+1;z^{t})+a_{t}(n;z^{t}) \label{SternB}%
\end{align}
whose coefficients are only $0$ or $1$ (they also defined in \cite[Sec.5]%
{Dilcher} another 'Type 2' Stern polynomials, which however will not be
discussed in this paper). They introduced the sequence of integers%
\[
\alpha_{n}=\alpha_{n}(k)=\frac{2^{kn}-\left(  -1\right)  ^{n}}{2^{k}+1}%
\quad\left(  n\geq0\right)
\]
and proved \cite[Prop.6.1]{Dilcher} that the subsequence $\left(  a_{t}%
(\alpha_{n};z)\right)  _{n\geq0}$ satisfies the three-term recurrence relation%
\begin{equation}
a_{t}(\alpha_{n+1};z)=a_{t}(2^{k}-1;z)a_{t}(\alpha_{n};z^{t^{k}})+a_{t}%
(2^{k};z^{t^{k}})a_{t}(\alpha_{n-1};z^{t^{2k}}), \label{Rec}%
\end{equation}
which generates \cite[Prop.6.2]{Dilcher} the finite continued fraction%
\begin{align}
&  \frac{a_{t}(\alpha_{n+1};z)}{a_{t}(\alpha_{n};z^{t^{k}})}\left.  =\right.
a_{t}(2^{k}-1;z)+\frac{a_{t}(2^{k};z^{t^{k}})}{a_{t}(2^{k}-1;z^{t^{k}}%
)}\label{CF1}\\
&  \qquad\qquad\qquad\qquad\qquad\qquad%
\genfrac{}{}{0pt}{}{{}}{+}%
\frac{a_{t}(2^{k};z^{t^{2k}})}{a_{t}(2^{k}-1;z^{t^{2k}})}%
\genfrac{}{}{0pt}{}{{}}{+\cdots+}%
\frac{a_{t}(2^{k};z^{t^{\left(  n-1\right)  k}})}{a_{t}(2^{k}-1;z^{t^{\left(
n-1\right)  k}})}.\nonumber
\end{align}
The polynomials $a_{t}(\alpha_{n+1};z)$ and $a_{t}(\alpha_{n};z)$ agree to
ever higher powers of $z,$ which proves \cite[Prop.6.4]{Dilcher} the existence
of the limit%
\[
H_{k}(z)=\lim_{n\rightarrow\infty}a_{t}(\alpha_{n};z),
\]
which defines a power series converging in $\left\vert z\right\vert <1$ and
having only $0$ or $1$ as coefficients. Letting $n\rightarrow\infty$ in
$\left(  \ref{Rec}\right)  $ and $\left(  \ref{CF1}\right)  ,$ we find that
$H_{k}(z)$ satisfies the functional equation%
\begin{equation}
H_{k}(z)=a_{t}(2^{k}-1;z)H_{k}(z^{t^{k}})+a_{t}(2^{k};z^{t^{k}})H_{k}%
(z^{t^{2k}}). \label{Rec2}%
\end{equation}
This generates \cite[Prop.6.5]{Dilcher} the infinite continued fraction%
\begin{align}
&  \frac{H_{k}(z)}{H_{k}(z^{t^{k}})}\left.  =\right.  a_{t}(2^{k}%
-1;z)+\frac{a_{t}(2^{k};z^{t^{k}})}{a_{t}(2^{k}-1;z^{t^{k}})}\label{CF2}\\
&  \qquad\qquad\qquad\qquad\qquad%
\genfrac{}{}{0pt}{}{{}}{+}%
\frac{a_{t}(2^{k};z^{t^{2k}})}{a_{t}(2^{k}-1;z^{t^{2k}})}%
\genfrac{}{}{0pt}{}{{}}{+\cdots+}%
\frac{a_{t}(2^{k};z^{t^{nk}})}{a_{t}(2^{k}-1;z^{t^{nk}})}%
\genfrac{}{}{0pt}{}{{}}{+\cdots}%
,\nonumber
\end{align}
which converges in $\left\vert z\right\vert <1,$ where by \cite[Lem.6.1]%
{Dilcher}%
\begin{equation}
a_{t}(2^{k};z)=z^{\frac{t^{k}-1}{t-1}},\quad a_{t}(2^{k}-1;z)=\sum_{i=1}%
^{k}z^{\frac{t^{k}-t^{i}}{t-1}}. \label{at1}%
\end{equation}
By $\left(  \ref{Rec2}\right)  ,$ $f_{1}(z)=H_{k}(z)$ and $f_{2}%
(z)=H_{k}(z^{t^{k}})$ satisfy the system of functional equations%
\begin{equation}
\left(
\begin{array}
[c]{c}%
f_{1}(z^{t^{k}})\\
f_{2}(z^{t^{k}})
\end{array}
\right)  =A(z)\left(
\begin{array}
[c]{c}%
f_{1}(z)\\
f_{2}(z)
\end{array}
\right)  , \label{Mat}%
\end{equation}
where%
\begin{align}
A(z)  &  =\frac{1}{a_{t}(2^{k};z^{t^{k}})}B(z),\quad\label{Mat1}\\
B(z)  &  =\left(
\begin{array}
[c]{cc}%
0 & a_{t}(2^{k};z^{t^{k}})\\
1 & -a_{t}(2^{k}-1;z)
\end{array}
\right)  . \label{Mat2}%
\end{align}

Our main result is

\begin{theorem}
\label{ThStern}For any algebraic number $\alpha$ with $0<\left\vert
\alpha\right\vert <1,$ the numbers $H_{k}(\alpha)$ and $H_{k}(\alpha^{t^{k}})$
are algebraically independent.
\end{theorem}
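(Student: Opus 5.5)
The plan is to apply Mahler's method for linear systems in the form of the theorem of Kubota and Nishioka. For $d=t^{k}\geq2$, this theorem says: if $f_1,\dots,f_m$ satisfy $\mathbf f(z^{d})=A(z)\mathbf f(z)$ with $A\in\mathrm{GL}_m(\overline{\mathbb Q}(z))$, and $\alpha$ is algebraic with $0<|\alpha|<1$ such that no $\alpha^{d^{l}}$ ($l\ge0$) is a pole of $A$ or a zero of $\det A$, then
\[
\operatorname{trdeg}_{\mathbb Q}\mathbb Q(f_1(\alpha),\dots,f_m(\alpha))=\operatorname{trdeg}_{\overline{\mathbb Q}(z)}\overline{\mathbb Q}(z)(f_1,\dots,f_m).
\]

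\textbf{Admissibility of $\alpha$.} By $(\ref{Mat1})$, $(\ref{Mat2})$ and $(\ref{at1})$ we have
\[
\det A(z)=-a_t(2^k;z^{t^k})^{-1}=-z^{-t^{k}(t^{k}-1)/(t-1)}.
\]
The only pole of $A$ and of $A^{-1}$ is therefore $z=0$. Since $\alpha^{t^{kl}}\neq0$ for every $l$, every nonzero $\alpha$ in the unit disc is admissible. This step is immediate.

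\textbf{Reduction to a functional statement.} It remains to prove that $H_k(z)$ and $H_k(z^{t^k})$ are algebraically independent over $\mathbb C(z)$. I would argue by contradiction and use the standard dichotomy for rank-two Mahler systems. If the transcendence degree were at most $1$, Nishioka's lemma on algebraic relations between Mahler functions, together with the homogeneous linear structure of $(\ref{Mat})$, should give one of two conclusions:
\begin{itemize}
\item[(a)] a nontrivial linear relation $P(z)H_k(z)+Q(z)H_k(z^{t^k})=R(z)$ with rational $P,Q,R$; or
\item[(b)] a relation forcing $g=H_k(z)/H_k(z^{t^k})$ to be algebraic over $\mathbb C(z)$, and hence rational, since Mahler-type functions algebraic over $\mathbb C(z)$ are rational.
\end{itemize}
Case (a) in fact reduces to case (b) or to the rationality of $H_k$. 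This is because $H_k$ is a solution of the second-order Mahler equation $(\ref{Rec2})$, which is not degenerate since its coefficients are nonzero.

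\textbf{Ruling out rational solutions.} The main obstacle is to show that $g(z)$ is not rational, and that $H_k$ is not rational. By $(\ref{Rec2})$, $g$ satisfies the Riccati-type equation
\[
g(z)=a_t(2^k-1;z)+\frac{z^{t^{k}(t^k-1)/(t-1)}}{g(z^{t^k})}.
\]
Suppose $g=P/Q$ in lowest terms, with polynomials $P,Q$. Comparing degrees and valuations of $P$ and $Q$ on both sides, under $z\mapsto z^{t^k}$, should give a contradiction. On one side $\deg P-\deg Q$ would have to equal the degree of $a_t(2^k-1;z)$, which is $(t^k-t)/(t-1)$; on the other side the equation forces a multiplicative growth by $t^k$. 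If this is inconclusive, I would instead follow the zeros of $Q$ under $z\mapsto z^{t^k}$: roots of $Q$ must be propagated to roots of unity or to $0$, and the equation evaluated at those roots of unity gives a contradiction.

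For the rationality of $H_k$, I would use that its coefficients are $0$ or $1$ and that it is not eventually periodic. By Fatou's theorem, such a series is either rational, in which case its coefficient sequence is eventually periodic, or transcendental. Non-periodicity should follow from the gaps forced by $(\ref{Rec2})$, namely the contribution $z^{t^k(t^k-1)/(t-1)}H_k(z^{t^{2k}})$, which becomes increasingly lacunary.

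Once these rationality obstructions are excluded, the transcendence degree of the functions is $2$. The Kubota–Nishioka theorem then gives that $H_k(\alpha)$ and $H_k(\alpha^{t^k})$ are algebraically independent, which is Theorem \ref{ThStern}.
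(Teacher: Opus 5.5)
Your outer framework matches the paper's. It applies the Kubota--Nishioka theorem to the values, it gets admissibility from $\det A(z)=-z^{-t^k(t^k-1)/(t-1)}$ (which is correct), and it reduces everything to algebraic independence of the functions. The gap is the reduction step, which is where all the work lies.

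Algebraic dependence of $f_1,f_2$ does not force your alternatives (a) or (b), even for companion systems. Take any $d\geq2$, $\ell(z)=\prod_{n\ge0}(1-2z^{d^n})$, so that $\ell(z^d)=\ell(z)/(1-2z)$, and set $f=\ell+\ell^{-1}$.
\begin{itemize}
\item Then $\ell^{\pm1}$ are $\mathbb{C}(z)$-linear combinations of $f(z)$ and $f(z^d)$, so $\ell\cdot\ell^{-1}=1$ gives a quadratic relation between $f(z)$ and $f(z^d)$.
\item Yet no nontrivial relation $Pf(z)+Qf(z^d)=R$ exists, and $f(z)/f(z^d)$ is transcendental, since either would make $\ell$ algebraic.
\end{itemize}
This is a system that becomes $\mathrm{diag}(\lambda,\lambda^{-1})$ after a gauge change. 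Here $\det A(z)=-z^{-e}$ with $e=t^k(t^k-1)/(t-1)$, which up to sign is the coboundary $\phi(z^{t^k})/\phi(z)$ with $\phi=z^{-t^k/(t-1)}$. So the determinant gives no obstruction to this degeneration, and you cannot dismiss it.

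What the general theory actually yields (Nishioka's Th.~5.2, conditions (i)--(iii) in the proof of Lemma~\ref{LemStern3}) is weaker. It gives a rational invariant line for the iterate $B(z^{t^{(n-1)k}})\cdots B(z)$ after $z\mapsto z^r$, for all multiples $n$ of some $n_0$. Equivalently, it gives a solution $w\in\mathbb{C}(z^{1/r})$ of the $n$-fold iterated Riccati equation.

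Your claim that (a) reduces to (b) or to rationality of $H_k$ is also incorrect. If $R\neq0$, applying (\ref{Mat}) to $PH_k(z)+QH_k(z^{t^k})=R$ gives one of two outcomes:
\begin{itemize}
\item $f_1,f_2\in\mathbb{C}(z)$; or
\item a rational invariant linear form $(P,Q)$. In that case $w=-Q/P$ is a rational solution of $w(z)=a_t(2^k-1;z)+a_t(2^k;z^{t^k})/w(z^{t^k})$ different from $g$.
\end{itemize}

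So what you must prove is that the iterated, ramified Riccati equation has no rational solution at all, not merely that $g$ is not rational. A careful degree count at infinity does exclude solutions in $\mathbb{C}(z)$ of the un-iterated equation. It uses $e-t^k\deg a_t(2^k-1;z)=t^k$ and $e/(t^k+1)\notin\mathbb{Z}$. This also repairs your Fatou step, since $H_k$ rational would make $g$ such a solution. However, over $\mathbb{C}(z^{1/r})$ the balance $\deg w=e/(t^k+1)$ is admissible, and you do not address iterates at all.

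The paper handles all $n$ and $r$ uniformly, working with the coprime polynomials $a,b$ of condition (iii):
\begin{itemize}
\item At $z=0$, coprimality together with $\Delta_n=\pm z^{\delta_n}$ forces $\kappa^{(n)}=(-1)^n$ or $\kappa^{(n)}=(-1)^n\Delta_n$, hence $|\kappa^{(n)}(1)|=1$.
\item At $z=1$, (\ref{Eq1})--(\ref{Eq2}) then say that the nonzero vector $(b(1),a(1))$ is a left eigenvector of $B(1)^n$ for an eigenvalue of modulus $1$.
\item This is impossible, because the eigenvalues of $B(1)^n$ are $\gamma^n,\delta^n$, where $\gamma,\delta$ are the roots of $X^2+kX-1$ and $0<\gamma<1<|\delta|$.
\end{itemize}
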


As an application, we get

\begin{corollary}
\label{CorStern}The continued fraction%
\[
\frac{H_{k}(\alpha)}{H_{k}(\alpha^{t^{k}})}\left.  =\right.  a_{t}%
(2^{k}-1;\alpha)+\frac{a_{t}(2^{k};\alpha^{t^{k}})}{a_{t}(2^{k}-1;\alpha
^{t^{k}})}%
\genfrac{}{}{0pt}{}{{}}{+\cdots+}%
\frac{a_{t}(2^{k};\alpha^{t^{nk}})}{a_{t}(2^{k}-1;\alpha^{t^{nk}})}%
\genfrac{}{}{0pt}{}{{}}{+\cdots}%
\]
is transcendental for algebraic $\alpha$ with $0<\left\vert \alpha\right\vert
<1,$
\end{corollary}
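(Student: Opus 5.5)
Corollary \ref{CorStern} should follow directly from Theorem \ref{ThStern}, taking that theorem as established. Fix an algebraic $\alpha$ with $0<|\alpha|<1$. First, the value of the continued fraction is really $H_{k}(\alpha)/H_{k}(\alpha^{t^{k}})$. By (\ref{CF2}), the continued fraction converges for $|z|<1$ to $H_{k}(z)/H_{k}(z^{t^{k}})$. Since the Dilcher--Eriksen convergence result is stated pointwise in the disc, we can specialize it at $z=\alpha$.

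Two nondegeneracy points must be checked so that the quotient is meaningful.
\begin{itemize}
\item[(i)] $H_{k}(\alpha^{t^{k}})\neq0$. By Theorem \ref{ThStern}, $H_{k}(\alpha)$ and $H_{k}(\alpha^{t^{k}})$ are algebraically independent, so neither is algebraic; in particular neither is $0$.
\item[(ii)] The partial denominators $a_{t}(2^{k}-1;\alpha^{t^{nk}})$ may vanish at finitely many $n$. This does not matter: convergence of (\ref{CF2}) at $\alpha$ is the cited result, and the identity with $H_{k}(\alpha)/H_{k}(\alpha^{t^{k}})$ comes from iterating (\ref{Rec2}) at the point $\alpha$.
\end{itemize}
The mildest route is to interpret the corollary as a statement about the number $\xi=H_{k}(\alpha)/H_{k}(\alpha^{t^{k}})$, whose continued fraction expansion is (\ref{CF2}).

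Next, suppose for contradiction that $\xi$ is algebraic, say $\xi=\beta\in\overline{\mathbb{Q}}$. Then
\[
H_{k}(\alpha)-\beta H_{k}(\alpha^{t^{k}})=0 .
\]
The polynomial $P(X,Y)=X-\beta Y$ is nonzero and has algebraic coefficients. To get rational coefficients, multiply over the conjugates of $\beta$:
\[
Q(X,Y)=\prod_{\sigma}(X-\sigma(\beta)Y)\in\mathbb{Q}[X,Y].
\]
This $Q$ is nonzero and vanishes at $(H_{k}(\alpha),H_{k}(\alpha^{t^{k}}))$. Alternatively, note that algebraic independence over $\mathbb{Q}$ is equivalent to algebraic independence over $\overline{\mathbb{Q}}$. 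Either way, this contradicts Theorem \ref{ThStern}, so $\xi$ is transcendental.

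The only step needing care is the legitimacy of evaluating (\ref{CF2}) at $z=\alpha$. I would handle it by citing the pointwise convergence in $|z|<1$ from \cite[Prop.6.5]{Dilcher}, together with the nonvanishing in (i). The rest is immediate, so I expect no real obstacle beyond Theorem \ref{ThStern} itself.
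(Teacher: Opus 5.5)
Your proposal is correct and is exactly the intended deduction. The paper gives no separate proof: it presents the corollary as an immediate application of Theorem \ref{ThStern}, combined with the identification (\ref{CF2}) of the continued fraction with $H_k(\alpha)/H_k(\alpha^{t^k})$, since an algebraic value $\beta$ of the ratio would give the nontrivial relation $H_k(\alpha)-\beta H_k(\alpha^{t^k})=0$. Your extra checks are correct and fill in details the paper leaves implicit: the nonvanishing of $H_k(\alpha^{t^k})$ follows from algebraic independence, and the norm product passes from $\overline{\mathbb{Q}}$ to $\mathbb{Q}$ coefficients.
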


\begin{example}
The following continued fractions are transcendental.%
\begin{align*}
\frac{H_{1}(\alpha)}{H_{1}(\alpha^{t})}  &  =1+\frac{\alpha^{t}}{1}%
\genfrac{}{}{0pt}{}{{}}{+}%
\frac{\alpha^{t^{2}}}{1}%
\genfrac{}{}{0pt}{}{{}}{+}%
\frac{\alpha^{t^{3}}}{1}%
\genfrac{}{}{0pt}{}{{}}{+\cdots}%
,\\
\frac{H_{2}(\alpha)}{H_{2}(\alpha^{t^{2}})}  &  =1+\alpha^{t}+\frac
{\alpha^{t^{2}+t^{3}}}{1+\alpha^{t^{3}}}%
\genfrac{}{}{0pt}{}{{}}{+}%
\frac{\alpha^{t^{4}+t^{5}}}{1+\alpha^{t^{5}}}%
\genfrac{}{}{0pt}{}{{}}{+}%
\frac{\alpha^{t^{6}+t^{7}}}{1+\alpha^{t^{7}}}%
\genfrac{}{}{0pt}{}{{}}{+\cdots}%
,\\
\frac{H_{3}(\alpha)}{H_{3}(\alpha^{t^{3}})}  &  =1+\alpha^{t^{2}}%
+\alpha^{t+t^{2}}+\frac{\alpha^{t^{3}+t^{4}+t^{5}}}{1+\alpha^{t^{5}}%
+\alpha^{t^{4}+t^{5}}}%
\genfrac{}{}{0pt}{}{{}}{+}%
\frac{\alpha^{t^{6}+t^{7}+t^{8}}}{1+\alpha^{t^{8}}+\alpha^{t^{7}+t^{8}}}%
\genfrac{}{}{0pt}{}{{}}{+\cdots}%
.
\end{align*}

\end{example}

Our proof of Theorem \ref{ThStern} in Section \ref{Sec2} is an application of
Mahler's transcendence method \cite{Nishi} to continued fractions, as
previously developed by Ku. Nishioka \cite[Section 5.2]{Nishi} and Adamczewski
\cite{Adam}.

Another application of Mahler's method to the transcendence of continued
fractions has been proposed by Bundschuh and V\"{a}\"{a}n\"{a}nen, and
Corollary \ref{CorStern} above could be also deduced from \cite[Th.1.1]{Bund}.

\section{Proof of Theorem \ref{ThStern}}

\label{Sec2}For the proof we prepare four lemmas.

\begin{lemma}
\label{LemStern}Let $A(z)$ be defined in $\left(  \ref{Mat1}\right)  .$ Then
$0 $ is the only pole of $A(z).$
\end{lemma}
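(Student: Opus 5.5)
The plan is to compute the entries of $A(z)$ explicitly using $\left(\ref{at1}\right)$ and read off where they can be singular. By $\left(\ref{Mat1}\right)$ and $\left(\ref{Mat2}\right)$,
\[
A(z)=\left(
\begin{array}
[c]{cc}%
0 & 1\\
a_{t}(2^{k};z^{t^{k}})^{-1} & -a_{t}(2^{k}-1;z)\,a_{t}(2^{k};z^{t^{k}})^{-1}
\end{array}
\right).
\]
The entries $0$ and $1$ are constants, so any pole of $A(z)$ must come from the reciprocal of $a_{t}(2^{k};z^{t^{k}})$, possibly multiplied by the polynomial $a_{t}(2^{k}-1;z)$.

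The first step is to substitute $z^{t^{k}}$ into $\left(\ref{at1}\right)$. This gives
\[
a_{t}(2^{k};z^{t^{k}})=z^{t^{k}\frac{t^{k}-1}{t-1}},
\]
which is a monomial in $z$. Its only zero in $\mathbb{C}$ is $z=0$.

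Next, again by $\left(\ref{at1}\right)$, $a_{t}(2^{k}-1;z)=\sum_{i=1}^{k}z^{(t^{k}-t^{i})/(t-1)}$ is a polynomial, hence entire. Consequently every entry of $A(z)$ is a Laurent polynomial in $z$, holomorphic on $\mathbb{C}\setminus\{0\}$, so $0$ is the only possible pole.

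Finally, we check that $0$ really is a pole, so that it is "the only pole" and not merely the only candidate. The $(2,1)$ entry equals $z^{-t^{k}(t^{k}-1)/(t-1)}$, and its exponent is negative because $t\geq2$ and $k\geq1$. The argument is elementary, and the only point needing care is applying $\left(\ref{at1}\right)$ at the argument $z^{t^{k}}$ rather than at $z$.
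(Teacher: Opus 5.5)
Your proof is correct and takes the same route as the paper, which simply calls the lemma an immediate consequence of $\left(\ref{at1}\right)$. You make that explicit: $a_{t}(2^{k};z^{t^{k}})=z^{t^{k}(t^{k}-1)/(t-1)}$ is a monomial and $a_{t}(2^{k}-1;z)$ is a polynomial, so the entries of $A(z)$ are Laurent polynomials, and the $(2,1)$ entry $z^{-t^{k}(t^{k}-1)/(t-1)}$ shows that $0$ really is a pole. For the application in Theorem \ref{ThStern}, only the absence of nonzero poles is needed.
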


\begin{proof}
Immediate consequence of $\left(  \ref{at1}\right)  .$
\end{proof}

\begin{lemma}
\label{LemStern1}$H_{k}(1/2)/H_{k}(1/2^{t^{k}})$ is irrational.
\end{lemma}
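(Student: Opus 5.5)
The plan is to turn the continued fraction $\left(\ref{CF2}\right)$ at $z=1/2$ into an equivalent continued fraction whose partial numerators and denominators are positive integers, with each denominator larger than the corresponding numerator. The classical Tietze--Pringsheim irrationality criterion then applies: a convergent continued fraction $K(A_n/B_n)$ with $A_n,B_n\in\mathbb{Z}_{>0}$ and $B_n\geq A_n$ for all large $n$ has an irrational value.

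\textbf{Notation.} By $\left(\ref{CF2}\right)$ and $\left(\ref{at1}\right)$ we have
\[
\frac{H_{k}(1/2)}{H_{k}(1/2^{t^{k}})}=b_{0}+\frac{a_{1}}{b_{1}}\genfrac{}{}{0pt}{}{{}}{+}\frac{a_{2}}{b_{2}}\genfrac{}{}{0pt}{}{{}}{+\cdots},
\]
with the following quantities. Put $e_{n}=t^{nk}(t^{k}-1)/(t-1)$. Then
\[
a_{n}=2^{-e_{n}},\qquad b_{n}=\sum_{i=1}^{k}2^{-t^{nk}(t^{k}-t^{i})/(t-1)}.
\]
Here $b_{0}=a_{t}(2^{k}-1;1/2)$ is rational. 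The continued fraction converges because $\left(\ref{CF2}\right)$ converges in $|z|<1$.

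\textbf{Equivalence transformation.} Set $c_{0}=1$ and $c_{n}=2^{e_{n}}$ for $n\geq1$. The standard transformation
\[
K\!\left(\frac{a_{n}}{b_{n}}\right)=K\!\left(\frac{c_{n-1}c_{n}a_{n}}{c_{n}b_{n}}\right)
\]
does not change the value. The new partial numerators are $A_{1}=c_{1}a_{1}=1$ and $A_{n}=2^{e_{n-1}}$ for $n\geq2$. The new partial denominators are
\[
B_{n}=2^{e_{n}}b_{n}=\sum_{i=1}^{k}2^{t^{nk}(t^{i}-1)/(t-1)} .
\]
All of these are positive integers. Moreover, the $i=k$ term alone gives $B_{n}\geq2^{e_{n}}>2^{e_{n-1}}=A_{n}$, since $e_{n}>e_{n-1}$. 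So $B_{n}>A_{n}$ for every $n\geq1$.

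\textbf{Conclusion.} By Tietze's criterion, the tail $\xi=K_{n\geq1}(A_{n}/B_{n})$ is irrational. Then $H_{k}(1/2)/H_{k}(1/2^{t^{k}})=b_{0}+\xi$, with $b_{0}$ rational, is irrational.

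The only step needing care is quoting Tietze's criterion in the right form: positive integers with $B_n\ge A_n$, and strict inequality infinitely often or $B_n \geq A_n+1$, which we have here. Checking that the equivalence transformation preserves the value is routine, since it leaves all convergents unchanged.
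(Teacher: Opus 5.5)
Your proof is correct, but it takes a slightly different route from the paper, both in how you normalize the continued fraction and in which irrationality criterion you use.

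\textbf{The paper's route.} It uses the multipliers $2^{D_n}$ with $D_n=d_n-d_{n-1}+\cdots+(-1)^{n-1}d_1$, so that $D_{n-1}+D_n=d_n$ (your $e_n$). This makes every partial numerator equal to $1$. The work then lies in checking that $2^{D_n}a_t(2^k-1;1/2^{t^{nk}})$ is still a positive integer, i.e.\ that $D_n>\deg a_t(2^k-1;z^{t^{nk}})=d_n-t^{nk}$. The paper gets this from $D_n\geq d_n-d_{n-1}$ and $t^k>(t^k-1)/(t-1)$. It then concludes from the textbook fact that an infinite regular continued fraction is irrational.

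\textbf{Your route.} Your multipliers $2^{e_n}$ clear all powers of $2$ at once. Integrality of $B_n=\sum_{i}2^{t^{nk}(t^i-1)/(t-1)}$ is then automatic, and the only inequality you need is $e_n>e_{n-1}$, with no alternating sums. The price is that you must invoke Tietze's criterion for continued fractions with non-unit partial numerators, rather than the standard statement for simple continued fractions.

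\textbf{Making it self-contained.} In the case you need (positive integer entries with $B_n>A_n$), that criterion has a two-line proof you could include. Every tail $\xi_n=A_{n+1}/(B_{n+1}+\xi_{n+1})$ lies in $(0,1)$, since $\xi_n<A_{n+1}/B_{n+1}\leq 1$. If $\xi_0$ were rational, the relation $\xi_{n+1}=A_{n+1}/\xi_n-B_{n+1}$ would make every tail rational. The reduced denominators would then strictly decrease: if $\xi_n=p/q$ with $0<p<q$, then $\xi_{n+1}$ has denominator dividing $p$. That is impossible for an infinite sequence of positive integers.
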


\begin{proof}
By $\left(  \ref{at1}\right)  ,$ we have%
\begin{equation}
d_{n}:=\deg a_{t}(2^{k};z^{t^{nk}})=\deg a_{t}(2^{k}-1;z^{t^{nk}}%
)+t^{nk}=t^{nk}\frac{t^{k}-1}{t-1}. \label{Deg}%
\end{equation}
Replacing $z$ by $1/2$ in $\left(  \ref{CF2}\right)  $ yields
\begin{align*}
\frac{H_{k}(1/2)}{H_{k}(1/2^{t^{k}})}  &  =a_{t}(2^{k}-1;1/2)+\frac
{a_{t}(2^{k};1/2^{t^{k}})}{a_{t}(2^{k}-1;1/2^{t^{k}})}%
\genfrac{}{}{0pt}{}{{}}{+}%
\frac{a_{t}(2^{k};1/2^{t^{2k}})}{a_{t}(2^{k}-1;1/2^{t^{2k}})}\\
&  \qquad%
\genfrac{}{}{0pt}{}{{}}{+}%
\frac{a_{t}(2^{k};1/2^{t^{3k}})}{a_{t}(2^{k}-1;1/2^{t^{3k}})}%
\genfrac{}{}{0pt}{}{{}}{+\cdots+}%
\frac{a_{t}(2^{k};1/2^{t^{nk}})}{a_{t}(2^{k}-1;1/2^{t^{nk}})}%
\genfrac{}{}{0pt}{}{{}}{+\cdots}%
\\
&  =a_{t}(2^{k}-1;1/2)+\frac{2^{d_{1}}a_{t}(2^{k};1/2^{t^{k}})}{2^{d_{1}}%
a_{t}(2^{k}-1;1/2^{t^{k}})}\\
&  \qquad%
\genfrac{}{}{0pt}{}{{}}{+}%
\frac{2^{d_{2}}a_{t}(2^{k};1/2^{t^{2k}})}{2^{d_{2}-d_{1}}a_{t}(2^{k}%
-1;1/2^{t^{2k}})}%
\genfrac{}{}{0pt}{}{{}}{+\cdots+}%
\frac{2^{d_{n}}a_{t}(2^{k};1/2^{t^{nk}})}{2^{D_{n}}a_{t}(2^{k}-1;1/2^{t^{nk}%
})}%
\genfrac{}{}{0pt}{}{{}}{+\cdots}%
,\qquad
\end{align*}
where $D_{n}=d_{n}-d_{n-1}+d_{n-2}-\cdots+\left(  -1\right)  ^{n-1}d_{1}.$

We note that $2^{d_{n}}a_{t}(2^{k};1/2^{t^{nk}})=1$ and%
\[
2^{D_{n}}a_{t}(2^{k}-1;1/2^{t^{nk}})\in\mathbb{Z}_{>0},
\]
since $D_{n}-\deg a_{t}(2^{k}-1;z^{t^{nk}})>0$ for all $n\geq1.$ Indeed, this
is true for $n=1$ by $\left(  \ref{Deg}\right)  $ since $D_{1}=d_{1},$ and
true also by $\left(  \ref{Deg}\right)  $ for $n\geq2$ since%
\begin{gather*}
D_{n}-\deg a_{t}(2^{k}-1;z^{t^{nk}})\geq d_{n}-d_{n-1}-\left(  d_{n}%
-t^{nk}\right) \\
=t^{nk}-t^{(n-1)k}\frac{t^{k}-1}{t-1}>0.
\end{gather*}
Hence $H_{k}(1/2)/H_{k}(1/2^{t^{k}})$ is an infinite regular continued
fraction, which is known to be irrational.
\end{proof}

\begin{lemma}
\label{LemStern2}$H_{k}(z)$ is transcendental over $\mathbb{C}(z).$
\end{lemma}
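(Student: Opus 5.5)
We argue by contradiction and use Lemma \ref{LemStern1}. Suppose $H_{k}(z)$ is algebraic over $\mathbb{C}(z)$. The coefficients of $H_{k}$ are rational integers ($0$ or $1$), so $H_{k}$ is then also algebraic over $\mathbb{Q}(z)$. The plan has two steps: first show that $H_{k}(z)$ must be a rational function, and then show that rationality forces $H_{k}(1/2)/H_{k}(1/2^{t^{k}})$ to be rational, contradicting Lemma \ref{LemStern1}.

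\textbf{Step 1: algebraic implies rational.} We use the classical fact that a power series $f(z)$ satisfying a Mahler-type functional equation
\[
f(z)=a(z)f(z^{q})+b(z)f(z^{q^{2}})
\]
with $a,b\in\mathbb{C}[z]$ and $q\geq2$ is either rational or transcendental over $\mathbb{C}(z)$. Here the equation is $\left(\ref{Rec2}\right)$ with $q=t^{k}$. The argument runs as follows. If $f$ is algebraic of degree $m$ over $\mathbb{C}(z)$, the substitution $z\mapsto z^{q}$ is an embedding of $\mathbb{C}(z)$. Hence $f(z^{q})$ and $f(z^{q^{2}})$ are also algebraic of degree $m$, and all three lie in a common finite extension. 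Using $\left(\ref{Rec2}\right)$, one compares the branch behaviour of $f$: its finitely many ramification points and poles in $\mathbb{C}^{*}$ must be stable under $z\mapsto z^{q}$ and its inverse images. This forces all ramification to be at $0$ and $\infty$, so $f\in\mathbb{C}(z^{1/N})$. But $f$ is a power series in $z$, so in fact $f\in\mathbb{C}(z)$.

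Alternatively, we can simply cite the known theorem (Nishioka \cite{Nishi}, Theorem 1.3 style results on Mahler functions of order $\geq1$): an algebraic solution of a linear Mahler equation with polynomial coefficients is rational. I would cite this rather than reprove it. This is the step I expect to be the main obstacle, if it has to be done by hand.

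\textbf{Step 2: rationality contradicts Lemma \ref{LemStern1}.} Write $H_{k}(z)=P(z)/Q(z)$ with $P,Q\in\mathbb{Q}[z]$. The coefficients are rational because they are integers, and a rational power series over $\mathbb{C}$ with rational coefficients is a ratio of polynomials over $\mathbb{Q}$. Note that $Q(1/2)\neq0$ and $Q(1/2^{t^{k}})\neq0$, because $H_{k}$ converges in $|z|<1$. Then both $H_{k}(1/2)$ and $H_{k}(1/2^{t^{k}})$ are rational numbers.

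It remains to check that $H_{k}(1/2^{t^{k}})\neq0$, so that the quotient is defined. This holds because $H_{k}$ has constant term $a_{t}(\alpha_{n};0)$ equal to $1$: the coefficient sequence starts with $1$, consistent with the continued fraction $\left(\ref{CF2}\right)$ having leading term $a_{t}(2^{k}-1;0)=1$. Since all coefficients are nonnegative, $H_{k}(x)\geq1>0$ for $0<x<1$. Hence $H_{k}(1/2)/H_{k}(1/2^{t^{k}})\in\mathbb{Q}$, contradicting Lemma \ref{LemStern1}. Therefore $H_{k}(z)$ is transcendental over $\mathbb{C}(z)$.
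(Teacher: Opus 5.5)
Your proof is correct, but it gets the ``rational or transcendental'' dichotomy from a different source than the paper. The paper invokes Fatou's theorem. That theorem uses only the fact that the coefficients of $H_{k}$ take finitely many values (here $0$ and $1$), and never looks at the functional equation. You instead invoke the rational--transcendental dichotomy for Mahler functions, applied to (\ref{Rec2}) with $q=t^{k}$. That result uses only the linear Mahler equation with polynomial coefficients, so it would cover any power series solution regardless of its coefficients.

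Since the $0/1$ coefficients are available for free, Fatou is the lighter tool here. Your route is more general but rests on a deeper theorem, so you should give an exact reference rather than ``Theorem 1.3 style results.''

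Do not rely on your by-hand sketch as written. (\ref{Rec2}) does not make the branch locus $S\subset\mathbb{C}^{*}$ literally stable. It only gives that $s\in S$ implies $s^{q}\in S$ or $s^{q^{2}}\in S$. Using that $a_{t}(2^{k};z^{t^{k}})$ has no zeros in $\mathbb{C}^{*}$, it also gives the backward statement that $z_{0}^{q^{2}}\in S$ implies $z_{0}\in S$ or $z_{0}^{q}\in S$. The first relation forces $S$ to consist of roots of unity. Excluding those needs a further argument, for instance looking at an element of $S$ of maximal order. That is precisely the step hidden in ``this forces all ramification to be at $0$ and $\infty$.''

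Your Step 2 is the same deduction from Lemma \ref{LemStern1} as in the paper, and it is slightly more careful. You justify $P,Q\in\mathbb{Q}[z]$, $Q\neq0$ at the two evaluation points, and $H_{k}(1/2^{t^{k}})\neq0$. For the last point, the clean reason is that $a_{t}(\alpha_{1};z)=1$, and (\ref{Rec}) at $z=0$ then gives $a_{t}(\alpha_{n};0)=1$ for all $n\geq1$.
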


\begin{proof}
We appeal to Fatou's theorem \cite{Fatou}: \textit{A power series whose
coefficients take only finitely many values is rational or transcendental.} As
the coefficients of $H_{k}(z)$ are only $0$ or $1,$ $H_{k}(z)$ is rational or
transcendental. If $H_{k}(z)\in\mathbb{C}\left(  z\right)  ,$ then
$H_{k}(\alpha)\in\mathbb{Q}$ for each $\alpha\in\mathbb{Q}$ since $H_{k}%
(z)\in\mathbb{Q}\left[  \left[  z\right]  \right]  ,$ and so $H_{k}%
(1/2)/H_{k}(1/2^{t^{k}})\in\mathbb{Q}$, which contradicts Lemma
\ref{LemStern1}. Therefore $H_{k}(z)\notin\mathbb{C}\left(  z\right)  $ and is transcendental.
\end{proof}

\begin{lemma}
\label{LemStern3}The power series $f_{1}(z)=H_{k}(z)$ and $f_{2}%
(z)=H_{k}(z^{t^{k}})$ are algebraically independent over $\mathbb{C}(z). $
\end{lemma}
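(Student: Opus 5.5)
We will argue by contradiction, reducing algebraic dependence of $f_{1},f_{2}$ to rationality of the quotient $g(z)=H_{k}(z)/H_{k}(z^{t^{k}})$. Rationality is then excluded by Lemma \ref{LemStern1}. Write $q=t^{k}$, $a(z)=a_{t}(2^{k}-1;z)$, $b(z)=a_{t}(2^{k};z)$. By $\left(\ref{Rec2}\right)$, $g$ satisfies the Riccati-type Mahler equation
\[
g(z)=a(z)+\frac{b(z^{q})}{g(z^{q})}.
\]
Also $g\in\mathbb{Q}((z))$: $H_{k}(z^{q})$ is a nonzero power series with $0,1$ coefficients, so its inverse is a Laurent series with rational coefficients.

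\emph{Step 1 (homogeneous relation).} Suppose $f_{1},f_{2}$ are algebraically dependent over $\mathbb{C}(z)$. By Lemma \ref{LemStern2}, $f_{1}$ is transcendental, so the transcendence degree of $\mathbb{C}(z,f_{1},f_{2})$ over $\mathbb{C}(z)$ is exactly $1$. The system $\left(\ref{Mat}\right)$ is linear, and by Lemma \ref{LemStern} $A(z)$ is invertible with entries in $\mathbb{C}(z)$. So the ideal of algebraic relations between $f_{1},f_{2}$ is carried into itself by the substitution $z\mapsto z^{q}$, $(X_{1},X_{2})\mapsto A(z)(X_{1},X_{2})^{T}$. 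I would use the standard argument for linear Mahler systems (Ku. Nishioka's lemma on homogeneous relations, \cite{Nishi}). Take a nonzero relation $P$ of minimal total degree and compare $P$ with its transform under the substitution. Its top homogeneous component is mapped to a homogeneous polynomial in the same ideal, and minimality forces a nonzero \emph{homogeneous} relation $P(f_{1},f_{2})=0$. Factoring $P(X,Y)=c\prod_{i}(X-h_{i}Y)$ over the algebraic closure of $\mathbb{C}(z)$ shows that $g=f_{1}/f_{2}$ equals some $h_{i}$, so $g$ is algebraic over $\mathbb{C}(z)$.

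\emph{Step 2 (algebraic implies rational).} This is the main obstacle. $g$ is algebraic and lies in $\mathbb{C}((z))$, hence is unramified at $0$. I would use the branch-point argument. If the algebraic function $g$ had a branch point or a pole-type singularity at some $\beta\notin\{0,\infty\}$, the functional equation $g(z^{q})=b(z^{q})/(g(z)-a(z))$ would propagate it. Here $a,b$ are polynomials and $b$ vanishes only at $0$, consistent with Lemma \ref{LemStern}. So every $q$-th root of $\beta$ would also be singular, then every $q^{2}$-th root, and so on, giving infinitely many branch points, which is impossible. The delicate part is handling points where $g-a$ vanishes, and checking that a branch point of $g(z^{q})$ indeed forces one of $g(z)$. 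I would run the argument on the finite set $S$ of branch points of $g$ in $\mathbb{C}^{\times}$ and show that $S$ is stable under $\beta\mapsto\beta^{1/q}$ (all roots), hence empty. Then $g$ is unramified away from $0,\infty$ and also at $0$. By Riemann--Hurwitz on $\mathbb{P}^{1}$, the covering defined by $g$ has at most one ramification point and is therefore trivial, so $g\in\mathbb{C}(z)$. Since $g\in\mathbb{Q}((z))$, a linear algebra argument on the coefficients gives $g\in\mathbb{Q}(z)$.

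\emph{Step 3 (contradiction).} With $g\in\mathbb{Q}(z)$, evaluate at $z=1/2$. Both $H_{k}(1/2)$ and $H_{k}(2^{-q})$ are positive, since their coefficients are $0$ or $1$ and not all zero. So $g$ has no pole there and $g(1/2)=H_{k}(1/2)/H_{k}(1/2^{t^{k}})$. If $1/2$ were a zero of the denominator in a reduced representation of $g$, we would have a pole, which is excluded. Hence $g(1/2)\in\mathbb{Q}$, contradicting Lemma \ref{LemStern1}. Therefore $f_{1},f_{2}$ are algebraically independent over $\mathbb{C}(z)$.
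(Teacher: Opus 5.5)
\textbf{There is a genuine gap in Step 1.} The general principle you invoke is false. You claim that algebraic dependence of the components of a solution of a linear Mahler system $f(z^{q})=A(z)f(z)$ forces a \emph{homogeneous} relation, but this fails in general.

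\emph{Counterexample.} Let $F(z)=\prod_{n\ge0}(1-z^{q^{n}})$. It satisfies $F(z^{q})=F(z)/(1-z)$ and is transcendental, since its coefficients lie in $\{-1,0,1\}$ and it is not rational. Put $(f_1,f_2)=(F^{2},F)$. Then $f_1(z^{q})=f_1(z)/(1-z)^{2}$ and $f_2(z^{q})=f_2(z)/(1-z)$, so the system is diagonal and invertible, $f_1$ is transcendental, and $f_1=f_2^{2}$. Yet every nonzero homogeneous $P$ of degree $d$ gives $P(F^{2},F)=F^{d}\sum_i c_iF^{i}\neq0$. So no homogeneous relation exists, and $f_1/f_2=F$ is transcendental.

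\emph{Where your sketch fails.} Write $\phi(P)(X)=P(z^{q},A(z)X)$ and $P_d$ for the top component of $P$. Then $\phi(P)$ lies in the ideal, but its top component $\phi(P_d)$ need not. Minimality of $d$ only shows that $P\mapsto P_d$ is injective on relations of degree $\le d$. It gives no way to eliminate $P_{d-1},\dots,P_0$.

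\textbf{The missing case.} You never treat the case where $g=f_1/f_2$ is transcendental but $f_2$ is algebraic over $\mathbb{C}(z,g)$. This case is not vacuous a priori. The field $\mathbb{C}(z,g)$ is stable under $z\mapsto z^{q}$, and $f_2(z^{q})=\frac{g(z)-a(z)}{b(z^{q})}f_2(z)$ is a rank-one equation over it. Algebraicity of $f_2$ then yields $f_2^{m}=R(z,g)$, with $R(z^{q},b(z^{q})/(Y-a(z)))=\bigl((Y-a(z))/b(z^{q})\bigr)^{m}R(z,Y)$ holding identically in $Y$. A divisor count in $Y$ then produces a periodic point of the twisted M\"obius map $Y\mapsto b(z^{q})/(Y-a(z))$ in $\mathbb{P}^{1}(\overline{\mathbb{C}(z)})$. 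Equivalently, some iterate $G^{(n)}$ of the system becomes reducible over an algebraic extension of $\mathbb{C}(z)$.

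Excluding such reducibility is what conditions (i)--(iii) of Nishioka's Theorem 5.2 encode. Conditions (i) and (ii) are the triangular cases, which contain examples like the one above. Condition (iii) is an invariant line over $\mathbb{C}(z^{1/r})$. The paper rules all three out using the values $G^{(n)}(0)$ and the exponential growth of $G^{(n)}(1)$.

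\textbf{What Steps 2--3 do and do not cover.} They rule out only one such fixed point, namely $g$ itself. Lemma \ref{LemStern1} says nothing about other algebraic, possibly ramified, periodic points. So Step 1 has to be replaced by an irreducibility argument of this kind; as written, your proof does not establish the lemma.

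A minor point in Step 2: $g\in\mathbb{C}((z))$ only shows that one sheet is unramified over $0$, and other sheets may be ramified there. Riemann--Hurwitz still gives degree $1$, because a connected cover of $\mathbb{P}^{1}$ branched only over $\{0,\infty\}$ is totally ramified over $0$.
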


\begin{proof}
We apply \cite[Th. 5.2]{Nishi}. By $\left(  \ref{Mat}\right)  $ and Lemma
\ref{LemStern2}, the assumptions of the theorem are satisfied. Assume that
$f_{1}(z)$ and $f_{2}(z)$ are algebraically dependent over $\mathbb{C}\left(
z\right)  .$ Then, setting%
\begin{equation}
G^{(n)}(z)=\left(
\begin{array}
[c]{cc}%
G_{11}^{(n)}(z) & G_{12}^{(n)}(z)\\
G_{21}^{(n)}(z) & G_{22}^{(n)}(z)
\end{array}
\right)  =B(z^{t^{(n-1)k}})\cdots B(z^{t^{k}})B(z), \label{MatG}%
\end{equation}
there exists $n_{0}\in\mathbb{Z}_{>0}$ such that at least one of the following
three conditions holds.

(i) $G_{12}^{(n)}(z)=0$ for $n=n_{0},2n_{0},3n_{0},$\ldots

(ii) $G_{21}^{(n)}(z)=0$ for $n=n_{0},2n_{0},3n_{0},$ \ldots

(iii) There exist $r\in\mathbb{Z}_{>0}$ and coprime $a(z),b(z)\in$
$\mathbb{C}\left[  z\right]  $ satisfying%
\begin{equation}
\frac{b(z)}{a(z)}=\frac{b(z^{t^{nk}})G_{11}^{(n)}(z^{r})+a(z^{t^{nk}}%
)G_{21}^{(n)}(z^{r})}{b(z^{t^{nk}})G_{12}^{(n)}(z^{r})+a(z^{t^{nk}}%
)G_{22}^{(n)}(z^{r})} \label{Quot}%
\end{equation}
for $n=n_{0},2n_{0},3n_{0},\ldots.$

We will prove that none of the properties (i)-(iii) is fulfilled, and Lemma
\ref{LemStern3} will follow by contradiction.\smallskip

Let us prove first that neither the property (i) nor (ii) is fulfilled. By
$\left(  \ref{MatG}\right)  $ and $\left(  \ref{Mat2}\right)  $ we have%
\begin{align}
G_{11}^{(1)}(z)  &  =0,\text{ }G_{12}^{(1)}(z)=a_{t}(2^{k};z^{t^{k}}),\text{
}\label{G0}\\
G_{21}^{(1)}(z)  &  =1,\text{ }G_{22}^{(1)}(z)=-a_{t}(2^{k}-1;z), \label{G10}%
\end{align}
and for $n\geq1$%
\begin{align}
&  G_{11}^{(n+1)}(z)\left.  =\right.  a_{t}(2^{k};z^{t^{(n+1)k}})G_{21}%
^{(n)}(z),\quad\label{G1}\\
&  G_{12}^{(n+1)}(z)\left.  =\right.  a_{t}(2^{k};z^{t^{(n+1)k}})G_{22}%
^{(n)}(z),\label{G2}\\
&  G_{21}^{(n+1)}(z)\left.  =\right.  G_{11}^{(n)}(z)-a_{t}(2^{k}-1;z^{t^{nk}%
})G_{21}^{(n)}(z),\label{G3}\\
&  G_{22}^{(n+1)}(z)\left.  =\right.  G_{12}^{(n)}(z)-a_{t}(2^{k}-1;z^{t^{nk}%
})G_{22}^{(n)}(z). \label{G4}%
\end{align}

Since $a_{t}(2^{k};0)=0$ and $a_{t}(2^{k}-1;0)=1$ by $\left(  \ref{at1}%
\right)  ,$ we get from (\ref{G0})-(\ref{G4})%
\begin{align}
G_{11}^{(n)}(0)  &  =G_{12}^{(n)}(0)=0\quad\quad\left(  n\geq1\right)
,\label{G5}\\
G_{21}^{(n)}(0)  &  =-G_{22}^{(n)}(0)=\left(  -1\right)  ^{n-1}\quad
\quad\left(  n\geq1\right)  . \label{G6}%
\end{align}

By (\ref{G6}), $G_{21}^{(n)}(z)\neq0$ and $G_{22}^{(n)}(z)\neq0$ for $n\geq1.
$ Hence this yields $G_{12}^{(n)}(z)\neq0$ for $n\geq2$ by (\ref{G2}), and so
(i) and (ii) cannot hold.\smallskip

Now assume that (iii) is fulfilled. Since $a(z)$ and $b(z)$ are coprime, we
see by $\left(  \ref{Quot}\right)  $ that there exists $\kappa^{(n)}%
(z)\in\mathbb{C}\left[  z\right]  $ such that%
\begin{align}
b(z^{t^{nk}})G_{11}^{(n)}(z^{r})+a(z^{t^{nk}})G_{21}^{(n)}(z^{r})  &
=\kappa^{(n)}(z)b(z),\label{Eq1}\\
b(z^{t^{nk}})G_{12}^{(n)}(z^{r})+a(z^{t^{nk}})G_{22}^{(n)}(z^{r})  &
=\kappa^{(n)}(z)a(z). \label{Eq2}%
\end{align}
Thus by Cramer's formulas%
\begin{align*}
b(z^{t^{nk}})\Delta_{n}\left(  z\right)   &  =\kappa^{(n)}(z)\left\vert
\begin{array}
[c]{cc}%
b(z) & G_{21}^{(n)}(z^{r})\\
a(z) & G_{22}^{(n)}(z^{r})
\end{array}
\right\vert ,\\
a(z^{t^{nk}})\Delta_{n}\left(  z\right)   &  =\kappa^{(n)}(z)\left\vert
\begin{array}
[c]{cc}%
G_{11}^{(n)}(z^{r}) & b(z)\\
G_{12}^{(n)}(z^{r}) & a(z)
\end{array}
\right\vert ,
\end{align*}
where%
\[
\Delta_{n}\left(  z\right)  :=\left\vert
\begin{array}
[c]{cc}%
G_{11}^{(n)}(z^{r}) & G_{21}^{(n)}(z^{r})\\
G_{12}^{(n)}(z^{r}) & G_{22}^{(n)}(z^{r})
\end{array}
\right\vert .
\]
By $\left(  \ref{MatG}\right)  $ and $\left(  \ref{Mat2}\right)  ,$ we have%
\begin{align*}
\Delta_{n}\left(  z\right)   &  =\det\left(  B(z^{rt^{(n-1)k}})\cdots
B(z^{rt^{k}})B(z^{r})\right) \\
&  =\left(  -1\right)  ^{n}a_{t}(2^{k};z^{rt^{kn}})\cdots a_{t}(2^{k}%
;z^{rt^{k}}).
\end{align*}
Hence by $\left(  \ref{at1}\right)  $
\[
\Delta_{n}\left(  z\right)  =\left(  -1\right)  ^{n}z^{\delta_{n}},
\]
where $\delta_{n}\in\mathbb{Z}_{>0}$. So if $\kappa^{(n)}(\alpha)=0$ for some
$\alpha\neq0,$ then $a(\alpha^{t^{nk}})=b(\alpha^{t^{nk}})=0.$ This is
impossible since $a(z)$ and $b(z)$ are coprime. Consequently
\begin{equation}
\kappa^{(n)}(z)=c_{n}z^{u_{n}},\quad c_{n}\in\mathbb{C}^{\ast},\quad0\leq
u_{n}\leq\delta_{n}. \label{Gam}%
\end{equation}
We distinguish two cases.

\textbf{First case.\quad}If $a(0)\neq0,$ then putting $z=0$ in (\ref{Eq2}), we
get $\kappa^{(n)}(0)=\left(  -1\right)  ^{n}$ by (\ref{G5}) and (\ref{G6}). By
(\ref{Gam}), this yields
\begin{equation}
\kappa^{(n)}(z)=\left(  -1\right)  ^{n}\qquad\left(  a(0)\neq0\right)  .
\label{Case1}%
\end{equation}

\textbf{Second case.\quad}Let $a(0)=0.$ Then%
\[
\left(
\begin{array}
[c]{cc}%
G_{11}^{(n)}(z^{r}) & G_{21}^{(n)}(z^{r})\\
G_{12}^{(n)}(z^{r}) & G_{22}^{(n)}(z^{r})
\end{array}
\right)  \left(
\begin{array}
[c]{c}%
b(z^{t^{nk}})\\
a(z^{t^{nk}})
\end{array}
\right)  =\kappa^{(n)}(z)\left(
\begin{array}
[c]{c}%
b(z)\\
a(z)
\end{array}
\right)
\]
by (\ref{Eq1}) and (\ref{Eq2}), whence%
\begin{equation}
\frac{\Delta_{n}(z)}{\kappa^{(n)}(z)}\left(
\begin{array}
[c]{c}%
b(z^{t^{nk}})\\
a(z^{t^{nk}})
\end{array}
\right)  =\left(
\begin{array}
[c]{cc}%
G_{22}^{(n)}(z^{r}) & -G_{21}^{(n)}(z^{r})\\
-G_{12}^{(n)}(z^{r}) & G_{11}^{(n)}(z^{r})
\end{array}
\right)  \left(
\begin{array}
[c]{c}%
b(z)\\
a(z)
\end{array}
\right)  . \label{Delt}%
\end{equation}
Taking (\ref{Gam}) into account, let us define%
\begin{equation}
C^{(n)}(z):=\frac{\Delta_{n}(z)}{\kappa^{(n)}(z)}=\frac{\left(  -1\right)
^{n}}{c_{n}}z^{\delta_{n}-u_{n}}=\frac{\left(  -1\right)  ^{n}}{c_{n}}%
z^{v_{n}}, \label{D}%
\end{equation}
with $c_{n}\in\mathbb{C}^{\ast}$ and $0\leq v_{n}\leq\delta_{n}.$ Since
$a(0)=0,$ putting $z=0$ in (\ref{Delt}) yields by (\ref{G6})
\[
C^{(n)}(0)b(0)=G_{22}^{(n)}(0)b(0)=\left(  -1\right)  ^{n}b(0).
\]
As $a(z)$ and $b(z)$ are coprime, $b(0)\neq0$ and so $C^{(n)}(0)=\left(
-1\right)  ^{n}.$ By (\ref{D}), this yields%
\[
\frac{\Delta_{n}(z)}{\kappa^{(n)}(z)}=C^{(n)}(z)=\left(  -1\right)  ^{n}.
\]
Consequently
\begin{equation}
\kappa^{(n)}(z)=\left(  -1\right)  ^{n}\Delta_{n}(z)\qquad\left(
a(0)=0\right)  . \label{Case2}%
\end{equation}
Now in both cases we see by (\ref{Case1}) and (\ref{Case2}) that $\left\vert
\kappa^{(n)}(1)\right\vert =1.$ Substituting $z=1$ into (\ref{Eq1}) and
(\ref{Eq2}), we get
\begin{align}
\left\vert b(1)\right\vert  &  =\left\vert b(1)G_{11}^{(n)}(1)+a(1)G_{21}%
^{(n)}(1)\right\vert ,\label{Eq3}\\
\left\vert a(1)\right\vert  &  =\left\vert b(1)G_{12}^{(n)}(1)+a(1)G_{22}%
^{(n)}(1)\right\vert . \label{Eq4}%
\end{align}
However we have $a_{t}(2^{k};1)=1$ and $a_{t}(2^{k}-1;1)=k$ by $\left(
\ref{at1}\right)  $ and so by (\ref{G1})-(\ref{G4})
\begin{align*}
G_{11}^{(n+1)}(1)  &  =G_{21}^{(n)}(1),\\
G_{12}^{(n+1)}(1)  &  =G_{22}^{(n)}(1),\\
G_{21}^{(n+1)}(1)  &  =G_{11}^{(n)}(1)-kG_{21}^{(n)}(1),\\
G_{22}^{(n+1)}(1)  &  =G_{12}^{(n)}(1)-kG_{22}^{(n)}(1).
\end{align*}
Hence for every $i=1,2$ and $j=1,2$ we get the recurrence%
\[
G_{ij}^{(n+1)}(1)=-kG_{ij}^{(n)}(1)+G_{ij}^{(n-1)}(1)\qquad\left(
n\geq2\right)  .
\]
The characteristic equation $X^{2}+kX-1=0$ has the roots%
\[
\gamma=\frac{\sqrt{k^{2}+4}-k}{2},\quad\delta=-\frac{\sqrt{k^{2}+4}+k}{2}.
\]
Thus there exists real constants $c_{ij},$ $d_{ij}$ such that%
\[
G_{ij}^{(n)}(1)=c_{ij}\gamma^{n}+d_{ij}\delta^{n}\qquad\left(  n\geq1\right)
.
\]
Substituting these into (\ref{Eq4}) and (\ref{Eq3}), we get%
\begin{align*}
&  \left\vert a(1)\right\vert \left.  =\right.  \left\vert b(1)\left(
c_{12}\gamma^{n}+d_{12}\delta^{n}\right)  +a(1)\left(  c_{22}\gamma^{n}%
+d_{22}\delta^{n}\right)  \right\vert ,\\
&  \left\vert b(1)\right\vert \left.  =\right.  \left\vert b(1)\left(
c_{11}\gamma^{n}+d_{11}\delta^{n}\right)  +a(1)\left(  c_{21}\gamma^{n}%
+d_{21}\delta^{n}\right)  \right\vert .
\end{align*}
Noting that $0<\gamma<1$ and $\left\vert \delta\right\vert >1,$ we deduce that
$a(1)=b(1)=0$ by letting $n\rightarrow\infty$. This is impossible since $a(z)$
and $b(z)$ are coprime, and (iii) is not true.\smallskip
\end{proof}

\textbf{Proof of Theorem \ref{ThStern}.}\quad Theorem \ref{ThStern} is an
immediate consequence of \cite[Th. 4.2.1]{Nishi}. Indeed, the series
$f_{1}(z)$ and $f_{2}(z)\in\mathbb{Q}\left[  \left[  z\right]  \right]  $
converge in $\left\vert z\right\vert <1$ and satisfy the functional equation
$\left(  \ref{Mat}\right)  .$ Moreover, $\alpha$ is a nonzero algebraic number
and so $\alpha^{t^{nk}}$ is not a pole of $A(z)$ for all $n\geq1$ by Lemma
\ref{LemStern}. Since $f_{1}(z)$ and $f_{2}(z)$ are algebraically independent
over $\mathbb{C}\left(  z\right)  $ by Lemma \ref{LemStern3}, $f_{1}(\alpha)$
and $f_{2}(\alpha)$ are algebraically independent by \cite[Th. 4.2.1]{Nishi}.

\end{document}